\newcommand{\R}{\mathbb{R}}
\newcommand{\N}{\mathbb{N}}
\newcommand{\calm}{{\mathcal M}}
\numberwithin{equation}{section}
\newtheorem{theorem}[equation]{Theorem}
\newtheorem{lemma}[equation]{Lemma}
\newtheorem{corollary}[equation]{Corollary}
\newtheorem{proposition}[equation]{Proposition}
\theoremstyle{definition}
 \newtheorem{definition}[equation]{Definition}
\newtheorem{example}[equation]{Example}
\theoremstyle{remark}
\newtheorem{remark}[equation]{Remark}
\begin{document}

\title{The achievement set of generalized multigeometric sequences}

\author[D. Karvatskyi]{Dmytro Karvatskyi}
\address{Department of Dynamical Systems and Fractal Analysis, Institute of Mathematics of NAS of Ukraine, 01024-Kyiv, Ukraine}
\email{karvatsky@imath.kiev.ua}

\author[A. Murillo]{Aniceto Murillo}
\address{Departamento de \'Algebra, Geometr\'{\i}a y Topolog\'{\i}a, Universidad de M\'alaga,
Campus de Teatinos, s/n, 29071-M\'alaga, Spain}
\email{aniceto@uma.es}

\author[A. Viruel]{Antonio Viruel}
\address{Departamento de \'Algebra, Geometr\'{\i}a y Topolog\'{\i}a, Universidad de M\'alaga,
Campus de Teatinos, s/n, 29071-M\'alaga, Spain}
\email{viruel@uma.es}

\thanks{The first author was supported by the University of M{\'a}laga grant D.3-2023 for research stays of renowned  Ukrainian scientists.
The second and third authors were partially supported by the MINECO grant PID2020-118753GB-I00 of the AEI of the Spanish Government.}

\subjclass{40A05, 11B05, 28A80}
\keywords{Subsums set, achievement set, Cantor set, Cantorval, multigeometric series}

\begin{abstract}
 We study the topology  of all possible subsums of the {\em generalized multigeometric series}
 $k_1f(x)+k_2f(x)+\dots+k_mf(x)+\dots + k_1f(x^n)+\dots+k_mf(x^n)+\dots,$
where $k_1, k_2, \dots, k_m$ are fixed positive real numbers and $f$ runs along a certain class of non-negative functions on the unit interval. We detect particular regions of this interval for which this achievement set is, respectively, a compact interval, a Cantor set and a Cantorval.

\end{abstract}

\maketitle
\section{Introduction}

Let $E(z_n)$ denote the set of all possible sums of elements of the sequence $\{z_n\}$, or equivalently, all possible subsums of the series $\sum_{n\ge 1} z_n$. That is,
$$
E(z_n)=\left\{\sum_{n=1}^\infty c_nz_n,\,c_n\in\{0,1\}\right\}=\left\{\sum_{n\in A}z_n,\,A\in\N\right\}.
$$
Also known as the {\em achievement set} of the sequence  $\{z_n\}$ \cite{jo,zbMATH07383831,zbMATH07680677}, it was first considered by Kakeya \cite{Kakeya} who conjectured that, for a convergent positive series, this set is either a finite union of compact intervals or a Cantor set. This conjecture was refuted in \cite{Guthrie-Nymann} by means of the following result whose proof was completed in \cite{Nymann-Saenz}.

\begin{theorem}
\label{GN-Theorem}
The achievement set of a summable positive sequence is either:
\begin{enumerate}[label={\rm (\roman{*})}]
\item\label{thm:GN.1} a finite union of closed bounded intervals,
\item\label{thm:GN.2} homeomorphic to the Cantor set, or
\item\label{thm:GN.3} homeomorphic to a Cantorval.
\end{enumerate}
\end{theorem}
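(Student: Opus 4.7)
The plan is to study $E = E(z_n)$ as a compact subset of $\R$ whose self-similar additive structure forces it into one of the three listed topological types.

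First I would establish the basic point-set properties of $E$: it is compact, as the continuous image of $\{0,1\}^{\N}$ under the evaluation $(c_n) \mapsto \sum c_n z_n$ (continuity being a consequence of summability), and it is perfect because $z_n \to 0$, so any $s = \sum_{n \in A} z_n \in E$ can be approximated within $\varepsilon$ by flipping a single high-index coordinate $z_N < \varepsilon$.

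Next I would exploit the partial-sum decomposition. Setting $T_n = \{\sum_{k=1}^n c_k z_k : c_k \in \{0,1\}\}$ and $r_n = \sum_{k > n} z_k$, the Minkowski sums $F_n = T_n + [0, r_n]$ form a decreasing sequence of finite unions of closed intervals with $\bigcap_n F_n = E$. Since $T_n \subseteq T_{n+1}$, every component of $F_n$ contains at least one component of $F_{n+1}$, so the number $N_n$ of components of $F_n$ is non-decreasing. This yields a natural case split: if $(N_n)$ is bounded it stabilises, and passing to the limit component-by-component shows $E$ is a finite union of closed bounded intervals, giving \ref{thm:GN.1}; while if $N_n \to \infty$ and $\text{int}(E) = \emptyset$, then $E$ is a nonempty compact, perfect, totally disconnected, metrizable space, hence homeomorphic to the Cantor set by Brouwer's topological characterization, giving \ref{thm:GN.2}.

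The hard part, where I expect the main obstacle, is the remaining scenario: $N_n \to \infty$ together with $\text{int}(E) \neq \emptyset$. Here one must verify that $E$ satisfies the topological characterization of a Cantorval, namely that every non-degenerate component and every gap of $E$ is accumulated from both sides by further non-degenerate components of $E$, so that components and gaps interlace densely. The key tool is the self-similarity $E = T_n + E_n^*$, where $E_n^* = E((z_k)_{k > n})$ is the achievement set of the tail and lives inside $[0, r_n]$ with $r_n \to 0$. Each $t \in T_n$ carries a shrunken translated copy $t + E_n^*$ that, for large $n$, contains both interior (inherited from $\text{int}(E) \neq \emptyset$ via the decomposition $E = \bigcup_{t \in T_n}(t + E_n^*)$) and gaps (inherited from $N_n \to \infty$). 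Arranging such miniature copies near both endpoints of every interval component and every gap of $E$ produces the required bilateral accumulation and completes case \ref{thm:GN.3}.
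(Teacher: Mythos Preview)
The paper does not supply its own proof of this theorem; it is quoted as background, attributed to Guthrie--Nymann with the completion due to Nymann--S\'aenz. There is therefore no in-paper argument to compare your proposal against.

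Regarding the proposal on its own merits: your handling of cases (i) and (ii) via the nested approximants $F_n=T_n+[0,r_n]$ and Brouwer's characterization is correct and standard. The Cantorval case is, as you say, where the real work lies, and two issues in your sketch would need to be resolved. First, the target you state --- that every non-degenerate component and every gap is bilaterally accumulated by further \emph{non-degenerate} components --- does not match the definition adopted in the paper, which requires $E=\overline{\mathrm{int}\,E}$ together with the endpoints of each non-trivial component being accumulated by \emph{trivial} (singleton) components. You would have to either prove these characterizations equivalent or aim directly for the latter. Second, your self-similarity step $E=T_n+E_n^*$ does yield density of $\mathrm{int}(E)$ in $E$ (each $E_n^*$ inherits nonempty interior from $E$, and $r_n\to 0$), but it does not by itself produce trivial components of $E$: a gap of a single translate $t+E_n^*$ may be covered by other translates $t'+E_n^*$, so the existence of singleton components of $E$ accumulating at each interval endpoint requires a further argument. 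Controlling this overlap is exactly the delicate point in the Nymann--S\'aenz proof, and your outline stops just before it.
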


Recall that a (symmetric) {\em Cantorval} can be formally defined as a nonempty compact real subspace  which is the closure of its interior and the endpoints of any nontrivial component of this set are accumulation points of trivial components. All Cantorvals are homeomorphic to $[0,1]\backslash\cup_{n\ge 1}B_{2n}$ in which $B_n$ is the union of the $2^{n-1}$ open intervals which are eliminated at the $n$th stage of the construction of the Cantor set, which can be seen as $E(\frac{2}{3^n})$. In particular, any Cantorval is homeomorphic to the {\em Guthrie-Nymann Cantorval} given by $E(z_n)$, with $z_{2n}=2/4^n$ and $z_{2n-1}=3/4^n$, the one originally considered in \ref{thm:GN.3} of the above Theorem. Cantorvals also appear as attractors associated to some iterated function systems \cite{Banakiewicz}, and an analogous result to Theorem \ref{GN-Theorem} holds to describe the topology of the algebraic difference of certain Cantor sets \cite{achle,Mendes,no}.  

In this paper we consider a class of positive functions $f$ defined on the interval (see next section) and study the topological behavior, depending on $x$,  of  the set all possible subsums of the series
\begin{equation}\label{gms}
 k_1f(x)+\dots+k_mf(x)+k_1f(x^2)+ \dots + k_mf(x^2)+\dots + k_1f(x^n)+\dots+k_mf(x^n)+\dots,
\end{equation}
where $k_1,\dots, k_m $ are fixed positive scalars. Following the nomenclature in \cite{theFerdinands} we call this a {\em generalized multigeometric series}. We show that, whenever $x$ varies along some particular regions, the achievement set of the associated sequence is a compact interval (Theorem \ref{thm:intervals} and Corollary \ref{cor:contains_a_interval}), a Cantor set (Theorem \ref{thm:cantor}), or a Cantorval (Corollary \ref{cor:Cantorval_ferdinands}).

This extends the main results of \cite{dmytros} where $f(x)=\sin(x)$, and those in \cite{BFS} and \cite{theFerdinands}  where $f(x)=x$, i.e., the considered series is the multigeometric,
\begin{equation}
\label{MGS}
\displaystyle  k_1+ \dots +k_m+k_1q + \dots +k_mq+\dots +k_1q^{n-1} +\dots +k_mq^{n-1} + \dots,
\end{equation}
where $k_1, k_2, \dots k_m$ are fixed positive integers and  $q \in \left(0, 1\right)$.

Finally, notice that the achievement set for the sequence associated to the multigeometric series \eqref{gms} equals the arithmetic sum of the achievement sets associated to the different multigeometric series given by any partition of the scalars $k_1,\dots, k_m $. Therefore the results here can also be seen as criteria to determine the topological nature of the arithmetic sum of the achievement sets of the multigeometric sequences.

\section{The achievement set of some generalized multigeometric sequences}

Observe that the generalized multigeometric series given in (\ref{gms}), associated to a given function $f$ and to positive real numbers $k_1,\dots,k_m$,  can be written as
\begin{equation}\label{mgsw}
\sum_{n\ge 1} w_n(x) \quad \text{with}\quad w_n(x)=k_{g(n)}f(x^{\lfloor\frac{m+n-1}{m}\rfloor}),
\end{equation}
where, as usual, $\lfloor\,\cdot\,\rfloor$ denotes the integer part function and
$$g(n)=1+\big((n-1)\mod{m}\big).$$
Observe also that the achievement set for this sequence at a given $x$ is

$$E\bigl(w_n(x))\bigr)=\left\{ \sum_{n=1}^{\infty} \alpha_n f(x^n),\, \alpha_n \in A\right\}\quad\text{where}\quad  A=\left\{ \sum_{i=1}^{m} c_i k_i ,\, c_i \in \{ 0, 1 \} \right\}.$$

We will study the topology of this set for a particular class of functions:

\begin{definition}\label{def:HypoM}
    A function $f$ is {\em locally increasing  and power bounded (at $0$)}  if there exist $\epsilon\in(0,1)$, and $a,b,r\in\R^+$ such that $f$ is monotone increasing in $[0,\epsilon]$ and $$a\cdot x^r\leq f(x)\leq b\cdot x^r$$ for every $x\in[0,\epsilon].$ We denote by $\calm$ the class of locally increasing at 0 and power bounded functions.
\end{definition}
The following shows that differentiable functions abound in $\calm$:
\begin{proposition}\label{prop:HypoM_is_mild_hypothesis}
Let $f \in \mathcal{C}^{r+1}\big([0,1)\big)$ such that $f^{i)}(0)=0$ for $0\leq i<r$ and $f^{r)}(0)>0$. Then $f\in\calm$.
\end{proposition}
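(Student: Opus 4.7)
The plan is to carry out a direct Taylor expansion argument at $0$, since both hypotheses needed for membership in $\calm$, namely local monotonicity and the two-sided power bound, are essentially infinitesimal statements governed by the leading Taylor coefficient $c:=f^{(r)}(0)/r!>0$.

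First I would establish the power bound. By Taylor's theorem with Lagrange remainder, using that $f^{(i)}(0)=0$ for $0\le i<r$, I can write
\[
f(x)=c\,x^r+\frac{f^{(r+1)}(\xi_x)}{(r+1)!}\,x^{r+1}
\]
for some $\xi_x\in(0,x)$. Since $f^{(r+1)}$ is continuous on $[0,1)$, it is bounded by some $M>0$ on, say, $[0,1/2]$. Dividing by $x^r$ yields $\left|f(x)/x^r-c\right|\le \tfrac{M}{(r+1)!}\,x$ for $x\in(0,1/2]$. Choosing $\epsilon_1\in(0,1/2]$ small enough that this error is $\le c/2$, I get $\tfrac{c}{2}\,x^r\le f(x)\le \tfrac{3c}{2}\,x^r$ on $[0,\epsilon_1]$ (the case $x=0$ being trivial), so $a=c/2$ and $b=3c/2$ do the job on this interval.

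Next I would obtain the monotonicity. Because $f\in\mathcal{C}^{r+1}$, the derivative $f'$ lies in $\mathcal{C}^r$ with $f'^{(i)}(0)=f^{(i+1)}(0)=0$ for $0\le i<r-1$ and leading coefficient $f^{(r)}(0)/(r-1)!>0$. Applying the same Taylor expansion to $f'$, there exists $\eta_x\in(0,x)$ with
\[
f'(x)=x^{r-1}\!\left[\frac{f^{(r)}(0)}{(r-1)!}+\frac{f^{(r+1)}(\eta_x)}{r!}\,x\right].
\]
The bracketed expression tends to $f^{(r)}(0)/(r-1)!>0$ as $x\to 0^+$, so it is strictly positive on some interval $(0,\epsilon_2]$, which forces $f'(x)\ge 0$ on $[0,\epsilon_2]$ (with vanishing only at $0$ when $r\ge 2$, and with strict positivity throughout when $r=1$). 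In either situation $f$ is strictly increasing on $[0,\epsilon_2]$.

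Finally I would take $\epsilon=\min\{\epsilon_1,\epsilon_2\}\in(0,1)$ and record the constants $a$, $b$, $r$ from the previous paragraphs, giving the required inequalities of Definition \ref{def:HypoM}. There is no serious obstacle here; the only subtlety worth flagging is the bookkeeping for $r=1$ versus $r\ge 2$ in the derivative estimate, and the standing need to keep all the analysis on a neighborhood of $0$ strictly inside $[0,1)$ so that the continuity bound on $f^{(r+1)}$ is available.
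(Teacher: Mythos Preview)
Your argument is correct and follows the same Taylor-expansion idea as the paper. The only notable difference is in the packaging: the paper expands to order $r-1$ rather than $r$, writing $f(x)=\dfrac{f^{(r)}(\zeta)}{r!}\,x^r$ for some $\zeta\in[0,x)$, and then takes $a$ and $b$ to be the minimum and maximum of $f^{(r)}/r!$ on a single interval $[0,\epsilon]$ where $f^{(r)}>0$; monotonicity is read off from this same positivity of $f^{(r)}$ (cascading down through the vanishing lower derivatives) rather than from a separate expansion of $f'$. This yields one $\epsilon$ instead of two and in fact uses only $\mathcal{C}^r$ regularity, whereas your version genuinely consumes the $\mathcal{C}^{r+1}$ hypothesis through the remainder term. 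Both routes are entirely standard and equally valid.
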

\begin{proof}
With $f$ as in the statement there exists $\epsilon\in(0,1)$ such that $f^{r)}([0,\epsilon])\subset\R^+$, and therefore $f(x)$ is monotone increasing in $[0,\epsilon]$.

On the other hand, define
$$
a=\frac{1}{r!}\min\big\{f^{r)}(\zeta),\,\zeta\in [0,\epsilon]\big\}\quad \text{ and }\quad b=\frac{1}{r!}\max\big\{f^{r)}(\zeta),\,\zeta\in [0,\epsilon]\big\},
$$
and consider, for any  $\lambda\in\R$,  the function $h_\lambda(x)=f(x)-\lambda x^r$. We then use the Taylor $(r-1)$th-approximation together with the error formula of $h_\lambda$ at $0$ to conclude that
\begin{equation}\label{eq:expression_h}
    h_\lambda(x)=\left(\frac{f^{r)}(\zeta)}{r!}-\lambda\right)x^{r}\quad\text{for some}\quad \zeta\in[0,x).
\end{equation}
In particular, for any $x\in [0,\epsilon]$,  $h_a(x)=f(x)-ax^r\geq 0$  while $h_b(x)f(x)-ax^r\leq 0$.
\end{proof}

\begin{example}
(1) Note that the identity $f(x)=x$ is trivially in $\calm$ by choosing $a=b=r=1$ and any $\epsilon$.

(2) By the well known Jordan inequality,
$$\frac{2x}{\pi} \leq \sin{x} \leq  x,\quad |x|\le \frac{\pi}{2},$$
we see that the function $f(x)=\sin x$ is in $\calm$ by choosing $a=\frac{2}{\pi}$, $b=1$ and $\epsilon=\frac{\pi}{2}$. Nevertheless, as $f'(x)=\cos x$, Proposition \ref{prop:HypoM_is_mild_hypothesis} provides $a=\cos 1$, $b=r=\epsilon=1$. The same applies, for instance, to the function $f(x)=\tan x$ choosing $r=\epsilon=1$:
 $$x \leq \tan x \leq \frac{x}{\cos^{2}(1)},\quad x\in[0,1].$$

(3) Consider the function $f(x)=x \cdot \ln (x+1)$ in which $f(0)=f'(0)=0$ and $f''(x)=\frac{x+2}{(x+1)^2}>0$ for $x \in [0, 1]$. Then $f\in\calm$ choosing  $r=2$, $\epsilon=1$,  $a=\frac{3}{8}$ and $b=1$.
Another example covered by Proposition \ref{prop:HypoM_is_mild_hypothesis} and providing $r=2$ is, for instance,
 $f(x)=e^x-x-1$. Here $\epsilon=1$, $a=\frac{1}{2}$ and $b=\frac{e}{2}$ .
\end{example}

We also need:

\begin{lemma} Let $f\in\calm$ and let  $k_1\ge k_2\ge \dots\ge k_m>0$ be positive scalars. Then, the associated generalized multigeometric series  $\sum_{n\ge 1}w_n(x)$ is convergent for any $x\in [0,\epsilon]$. Moreover,  $w_n(x)\ge w_{n+1}(x)$ for any $n\in \N$ and any $x\in [0,\epsilon]$.
\end{lemma}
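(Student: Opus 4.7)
The plan is to exploit the block structure of the series. Writing any $n\in\N$ uniquely as $n=(j-1)m+i$ with $j\ge 1$ and $i\in\{1,\dots,m\}$, one checks that $\lfloor(m+n-1)/m\rfloor = j$ and $g(n)=i$, so the formula in \eqref{mgsw} simplifies to $w_n(x)=k_i f(x^j)$. Thus the terms are organized into consecutive blocks of length $m$, the $j$th block consisting of $k_1f(x^j),\dots,k_m f(x^j)$.

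For convergence, I would simply sum block by block. Since $x\in[0,\epsilon]\subset[0,1)$, all the powers $x^j$ lie in $[0,\epsilon]$, so the upper power bound from Definition~\ref{def:HypoM} gives $f(x^j)\le b\cdot x^{jr}$. Hence
\[
\sum_{n\ge 1}w_n(x)=\Bigl(\sum_{i=1}^m k_i\Bigr)\sum_{j\ge 1}f(x^j)\le b\Bigl(\sum_{i=1}^m k_i\Bigr)\sum_{j\ge 1}x^{jr},
\]
which is a geometric series of ratio $x^r<1$, hence convergent.

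For the monotonicity $w_n(x)\ge w_{n+1}(x)$ I would distinguish two cases according to whether $n+1$ falls in the same block as $n$ or starts a new one. In the intra-block case, $n=(j-1)m+i$ with $i<m$, so $w_n(x)=k_if(x^j)$ and $w_{n+1}(x)=k_{i+1}f(x^j)$, and the inequality reduces to $k_i\ge k_{i+1}$, which holds by hypothesis. In the inter-block case, $n=jm$ and $n+1=jm+1$, so we must verify $k_m f(x^j)\ge k_1 f(x^{j+1})$. Using the two-sided power bound, $k_m f(x^j)\ge k_m a\, x^{jr}$ while $k_1 f(x^{j+1})\le k_1 b\, x^{(j+1)r}$, so a sufficient condition is $k_m a\ge k_1 b\, x^r$, that is, $x\le \bigl(a k_m/(b k_1)\bigr)^{1/r}$.

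The one subtle point, and the only real obstacle, is this last inequality: since $ak_m\le bk_1$, the bound $(ak_m/(bk_1))^{1/r}$ may be strictly smaller than the $\epsilon$ provided by Definition~\ref{def:HypoM}. Hence the proof actually requires (and I would state openly) that we may shrink $\epsilon$ to $\min\bigl\{\epsilon,\,(ak_m/(bk_1))^{1/r}\bigr\}$, after which both conclusions hold on $[0,\epsilon]$. Everything else is routine: the block decomposition immediately makes the intra-block comparison tautological and the geometric comparison in the convergence step transparent.
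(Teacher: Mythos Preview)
Your convergence argument coincides with the paper's: bound $f(x^j)\le b\, x^{jr}$ and compare with a geometric series. For monotonicity, however, you are more scrupulous than the paper, which simply asserts that $f(x^n)>f(x^{n+1})$ together with $k_1\ge\cdots\ge k_m$ forces $w_n(x)\ge w_{n+1}(x)$, without ever isolating the inter-block transition. Your concern there is well founded: the inequality $k_m f(x^j)\ge k_1 f(x^{j+1})$ does not follow from those two facts alone, and indeed the lemma is false as stated. Take $f(x)=x$ (so any $\epsilon\in(0,1)$ is admissible in Definition~\ref{def:HypoM} with $a=b=r=1$), $m=2$, $k_1=10$, $k_2=1$, $x=0.9$, and compute $w_2(x)=0.9<8.1=w_3(x)$. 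Your proposed remedy of shrinking $\epsilon$ to at most $(ak_m/(bk_1))^{1/r}$ is exactly the right repair; the paper's argument glosses over a genuine gap that you have caught.
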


\begin{proof} The second assertion trivially holds: as $f$ is monotone increasing in $[0,\epsilon]$ it follows  that, for any $x$ in this interval, $f(x^n)>f(x^{n+1})$. Hence, as $k_1\ge k_2\ge \dots\ge k_m$, it follows that $w_n(x)>w_{n+1}(x)$.

On the other hand, if we write $K=\sum_{i=1}^mk_i$, we deduce that
$$\sum_{n=1}^{\infty}w_n(x) \leq b \cdot \sum_{n=1}^{\infty} K x^{nr}=\frac{bKx^r}{1-x^r},\quad x\in [0,\epsilon],$$
and therefore, this series converges since $\epsilon\in (0,1)$.
\end{proof}

In what follows we fix an arbitrary function $f\in\calm$, thus constants $\epsilon\in(0,1]$, and $a,b,r\in\R^+$ are those given in Definition \ref{def:HypoM}. We choose positive real numbers $k_1\ge k_2\ge \dots\ge k_m>0$, and consider the associated generalized multigeometric series  $\sum_{n\ge 1}w_n(x)$ for $x\in[0,\epsilon)$. We also fix the following notation:
$$
K=\sum_{i=1}^{m} k_i,\quad U_j=\sum_{i=j+1}^{m} k_i,\quad L_j=\sum_{i=1}^{j} k_i=K-U_j,\quad
$$
Also, for any series $\sum_{n\ge 1} z_n$ and any $\ell\ge 1$ we denote by $Z_\ell=\sum_{n>\ell}z_n$ the $\ell$th {\em tail} of the series. In particular, we write $W_\ell(x)=\sum_{n> \ell} w_n(x)$.

On the other hand, we will strongly use the following foundational result of Kakeya, rediscovered and extended by Hornich:

\begin{theorem} \cite{Hornich,Kakeya}
\label{KHM}
Let $\sum_{n\ge1} z_n$ be a convergent positive series with non-increasing terms, i.e., $z_{n} \geq z_{n+1}$ for any $n \in \N$. Then, the achievement set $E(z_n)$ is:
\begin{enumerate}[label={\rm (\roman{*})}]
\item\label{thm:KHM.2} a finite union of bounded closed intervals if and only if $z_{n} \leq Z_{n}$
 for all but finitely many $n\in\N$;
\item\label{thm:KHM.3} a compact interval if and only if $z_n\leq Z_n$ for every $n \in \N$;
\item\label{thm:KHM.4} homeomorphic to the Cantor set if $z_{n} > Z_{n}$
 for all but finitely many $n\in\N$.
\end{enumerate}
\end{theorem}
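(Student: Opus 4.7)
The plan is to exploit the recursive self-similarity of achievement sets under truncation. For each $\ell\ge 1$ let $T_\ell=E\bigl((z_n)_{n\ge\ell}\bigr)$, so $T_1=E(z_n)$, and record the basic decomposition
$$T_\ell = T_{\ell+1}\cup\bigl(z_\ell+T_{\ell+1}\bigr)\subseteq[0,Z_{\ell-1}],\quad \max T_\ell=Z_{\ell-1}.$$
Since $T_{\ell+1}\subseteq[0,Z_\ell]$ and $z_\ell+T_{\ell+1}\subseteq[z_\ell,Z_{\ell-1}]$, the open interval $(Z_\ell,z_\ell)$ is a genuine gap in $T_\ell$ precisely when $z_\ell>Z_\ell$. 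This equivalence between the termwise inequality and the presence of a ``tail gap'' drives the whole argument.

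For the sufficiency in \ref{thm:KHM.3}, assume $z_n\le Z_n$ for every $n$ and fix $t\in[0,Z_0]$. A greedy algorithm sets $t_1=t$ and $c_n=1$ or $c_n=0$ according as $z_n\le t_n$ or not, and updates $t_{n+1}=t_n-c_nz_n$. A short induction using the hypothesis yields $t_n\le Z_{n-1}$, hence $t_n\to 0$ and $\sum c_nz_n=t$, so $E(z_n)=[0,Z_0]$. The sufficient direction of \ref{thm:KHM.2} follows at once: choosing $N$ with $z_n\le Z_n$ for $n\ge N$ and applying the previous argument to the tail yields $T_N=[0,Z_{N-1}]$, whence $T_1=F+T_N$, with $F=E(\{z_1,\dots,z_{N-1}\})$ finite, is visibly a finite union of closed intervals.

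For \ref{thm:KHM.4}, assume $z_n>Z_n$ for all $n\ge N$. The set $T_N$ is compact (as a continuous image of $\{0,1\}^{\N}$), totally disconnected because the gap $(Z_\ell,z_\ell)$ persists at every scale $\ell\ge N$ inside each translate of $T_\ell$ appearing in the recursive decomposition of $T_N$, and perfect because $Z_\ell\to 0$ permits perturbing any admissible binary representation arbitrarily far out. By the standard topological characterization of the Cantor set, $T_N$ is homeomorphic to it, and the identity $T_1=F+T_N$ again transfers the property to $E(z_n)$.

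The main obstacle will be the ``only if'' halves of \ref{thm:KHM.2} and \ref{thm:KHM.3}: showing that if $z_n>Z_n$ happens infinitely often, then $E(z_n)$ cannot be a finite union of closed intervals. The difficulty is that the gap $(Z_\ell,z_\ell)$ of a given tail $T_\ell$ may in principle be filled when one assembles $T_1$ as a union of translates $s+T_\ell$ over partial subsums $s$ of the preceding terms. One has to verify that, at each scale $\ell$ where the inequality fails, at least one such translate has its gap uncovered by any other, so that $T_1$ inherits infinitely many genuine gaps. The standard bookkeeping selects the maximal admissible $s$ lying below a suitable threshold and checks that $s+z_\ell$ strictly exceeds the supremum of every competing translate that might intrude into the gap.
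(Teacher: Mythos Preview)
The paper does not prove this statement: Theorem~\ref{KHM} is quoted as a classical result of Kakeya and Hornich and is invoked as a black box throughout, so there is no ``paper's own proof'' to compare against. Your write-up is therefore a self-contained proof of a cited theorem rather than a reconstruction of anything in the manuscript.

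On its own merits your sketch is essentially the standard argument and is largely sound. The greedy construction for the sufficiency in \ref{thm:KHM.3} is correct (the inductive step $t_{n+1}\le Z_n$ uses exactly the hypothesis $z_n\le Z_n$ in the case $c_n=0$), and the reduction of \ref{thm:KHM.2} to a tail plus a finite set is fine. For \ref{thm:KHM.4} your argument that $T_N$ is a Cantor set is correct; you should perhaps make explicit why $T_1=F+T_N$ is again homeomorphic to the Cantor set: it is a finite union of translates of $T_N$, hence compact and perfect, and it is totally disconnected because a finite union of closed nowhere-dense subsets of $\RR$ is nowhere dense, so contains no interval. Where your proposal is weakest is exactly where you flag it: the necessity in \ref{thm:KHM.2} and \ref{thm:KHM.3}. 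Your last paragraph outlines the right phenomenon (an uncovered gap at infinitely many scales) but stops short of the actual verification. A clean way to finish is to note that if $z_\ell>Z_\ell$ then the open interval $(Z_\ell,z_\ell)$ is disjoint from $T_1$ itself, not merely from $T_\ell$: any subsum lying in that interval would have to use no $z_n$ with $n\le\ell$ (else it is $\ge z_\ell$, since $z_n\ge z_\ell$ by monotonicity) and hence would lie in $[0,Z_\ell]$. Thus each index $\ell$ with $z_\ell>Z_\ell$ produces a genuine gap $(Z_\ell,z_\ell)$ of $E(z_n)$, and these gaps are pairwise distinct (their right endpoints $z_\ell$ differ or, when $z_\ell=z_{\ell'}$, the left endpoints $Z_\ell$ differ), so infinitely many such indices force infinitely many components.
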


With the notation above define 
$$
d_I=\sqrt[r]{\max_{1\leq j\leq m}\Big\{\frac{bk_j-aU_j}{bk_j+aL_j}\Big\}}.
$$
Our first result extends and refines \cite[Theorem 3.1]{dmytros}:
\begin{theorem}\label{thm:intervals}
Whenever $d_I\le \epsilon$, the achievement set $E\bigl(w_n(x)\bigr)$ is a compact interval for  any $x\in [ d_I,\epsilon]$.
\end{theorem}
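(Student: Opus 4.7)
The plan is to invoke Theorem \ref{KHM}\ref{thm:KHM.3}: the preceding lemma guarantees that for $x\in[0,\epsilon]$ the series $\sum_{n\ge 1}w_n(x)$ is a convergent positive series with non-increasing terms, so it suffices to establish the tail inequality $w_n(x)\le W_n(x)$ for every $n\in\N$.

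First I would index terms by blocks: writing $n=(s-1)m+j$ with $s\ge 1$ and $1\le j\le m$, one has $w_n(x)=k_j f(x^s)$, and the tail splits naturally as
$$
W_n(x)=U_j f(x^s)+K\sum_{\ell\ge 1}f(x^{s+\ell}),
$$
so that the required inequality is equivalent to $(k_j-U_j)f(x^s)\le K\sum_{\ell\ge 1}f(x^{s+\ell})$. To turn this into a concrete condition on $x$, I would apply the power bounds from Definition \ref{def:HypoM} asymmetrically: $f(x^s)\le b\,x^{sr}$ on the left-hand side, while on the right-hand side $U_j f(x^s)\ge a U_j\,x^{sr}$ and $f(x^{s+\ell})\ge a\,x^{(s+\ell)r}$. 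This is legitimate because $x\le\epsilon<1$ forces $x^s\in[0,\epsilon]$ for every $s\ge 1$, which is where the power bounds apply.

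Summing the resulting geometric series $\sum_{\ell\ge 1}x^{(s+\ell)r}=x^{(s+1)r}/(1-x^r)$ and dividing through by $x^{sr}$ collapses the sufficient condition to
$$
bk_j-aU_j\le Ka\,\frac{x^r}{1-x^r},
$$
independently of $s$. If $bk_j-aU_j\le 0$ this is automatic; otherwise, clearing denominators and using $K=L_j+U_j$ rewrites the condition as $x^r\ge (bk_j-aU_j)/(bk_j+aL_j)$. Taking the maximum over $j\in\{1,\dots,m\}$ then yields exactly $x^r\ge d_I^r$, so the Kakeya--Hornich hypothesis is satisfied, uniformly in $n$, for every $x\in[d_I,\epsilon]$.

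I do not anticipate a genuine obstacle: the definition of $d_I$ is tailored so that this elementary estimate goes through, and the only care needed is to place the inequalities from Definition \ref{def:HypoM} on the correct sides (using the upper bound $f\le b(\cdot)^r$ on the single ``bad'' term $w_n(x)$ and the lower bound $f\ge a(\cdot)^r$ throughout the tail), together with the edge case $j=m$ where $U_j=0$ and the in-block remainder of $W_n(x)$ vanishes.
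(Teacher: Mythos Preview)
Your proposal is correct and follows essentially the same route as the paper: both bound $w_n(x)\le b\,k_j\,x^{sr}$ and $W_n(x)\ge a\,x^{sr}\bigl(U_j+K\,x^r/(1-x^r)\bigr)$, then reduce the Kakeya--Hornich tail condition to $x^r\ge (bk_j-aU_j)/(bk_j+aL_j)$ for each $j$, whose maximum is $d_I^r$. The only wrinkle is a harmless phrasing slip---after rewriting the inequality as $(k_j-U_j)f(x^s)\le K\sum_{\ell\ge1}f(x^{s+\ell})$ there is no longer a $U_jf(x^s)$ term on the right to bound from below---but the sufficient condition you actually derive is exactly the paper's.
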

\begin{proof}
According to the Theorem \ref{KHM}.\ref{thm:KHM.3}, it is enough to show that $w_n(x)\leq W_n(x)$ for every $n \in \N$ and  any $x\in [ d_I,\epsilon)$. Since $f\in\calm$,
$$
    w_n(x) =k_{g(n)}f(x^{\lfloor\frac{m+n-1}{m}\rfloor}) \leq b \cdot k_{g(n)} x^{\lfloor\frac{m+n-1}{m}\rfloor r},
$$
while
$$\label{eq:intervals2}
    W_n(x) =\sum_{\ell>n}  w_\ell(x) \geq a\cdot \sum_{\ell>n} k_{g(\ell)} x^{\lfloor\frac{m+\ell-1}{m}\rfloor r}=a\cdot x^{\lfloor\frac{m+n-1}{m}\rfloor r}\Big( U_{g(n)}  + K\frac{x^r}{1-x^r}\Big),
$$
for $n\in \N$ and $x\in[0,\epsilon]$.
Therefore $w_n(x)\leq W_n(x)$ whenever $$b\cdot k_{g(n)}x^{\lfloor\frac{m+n-1}{m}\rfloor r}\leq a\cdot x^{\lfloor\frac{m+n-1}{m}\rfloor r}\Big( U_{g(n)}  + K\frac{x^r}{1-x^r}\Big).$$
For $x=0$ this trivially holds and, for $x>0$ this is the case when
 $$b\cdot k_{g(n)}\leq a\cdot \Big( U_{g(n)}  + K\frac{x^r}{1-x^r}\Big),$$
that is, whenever
$$x\geq \sqrt[r]{\frac{bk_{g(n)}-aU_{g(n)}}{bk_{g(n)}+aL_{g(n)}}}$$
for all $n$. As  the function $g(n)$ takes the values $1, 2, \dots, m$, this happens for all $x\in [ d_I,\epsilon)$.
\end{proof}
As a consequence, denoting
$$
d_{IM} := \sqrt[r]{\frac{b}{b+a}},
$$ we obtain:
\begin{corollary}
\label{cor:contains_a_interval}
Whenever $d_{IM}\le \epsilon$, the achievement set  $E\bigl(w_n(x)\bigr)$ is a compact interval for any
$x \in [ d_{IM},\epsilon ]$.
\end{corollary}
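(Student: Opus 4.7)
The plan is to reduce the corollary directly to Theorem~\ref{thm:intervals} by showing that $d_I\le d_{IM}$. Once this is established, the hypothesis $d_{IM}\le\epsilon$ immediately gives $d_I\le\epsilon$, so Theorem~\ref{thm:intervals} applies and yields a compact interval as the achievement set on the larger range $[d_I,\epsilon]$, which contains $[d_{IM},\epsilon]$.

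The core of the argument is thus the term-wise bound
$$
\frac{bk_j-aU_j}{bk_j+aL_j}\le \frac{b}{b+a},\qquad 1\le j\le m,
$$
from which the inequality $d_I\le d_{IM}$ follows by monotonicity of the $r$-th root. Both denominators are strictly positive (since $b,k_j>0$ and $L_j\ge k_j>0$), so I would cross-multiply. After expansion and cancellation of $b^2k_j$ from both sides, the claim collapses to $b(k_j-K)\le aU_j$. Using $K=L_j+U_j$ and that $L_j$ already contains the summand $k_j$, we have
$$
k_j-K=-\sum_{i\ne j}k_i\le 0,
$$
so the left-hand side is non-positive while the right-hand side is non-negative, and the inequality is immediate.

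A minor formal point is that some numerators $bk_j-aU_j$ may be negative, in which case the corresponding condition ``$x\ge \sqrt[r]{(bk_j-aU_j)/(bk_j+aL_j)}$'' appearing in the proof of Theorem~\ref{thm:intervals} is automatic and can be discarded from the maximum defining $d_I$; crucially, the case $j=m$ gives $U_m=0$ and contributes the strictly positive value $bk_m/(bk_m+aK)$, so $d_I$ is always a well-defined non-negative real and the bound by $d_{IM}$ is unaffected. I do not expect any genuine obstacle here: the whole proof is a one-line algebraic manipulation, and the corollary is best viewed as a ``$k$-free'' repackaging of Theorem~\ref{thm:intervals} in which sharpness is traded for a threshold depending only on the invariants $a$, $b$, $r$ of the function $f$.
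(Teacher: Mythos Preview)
Your proof is correct and follows essentially the same approach as the paper: both establish $d_I\le d_{IM}$ by elementary algebra and then invoke Theorem~\ref{thm:intervals}. The paper obtains the term-wise bound by dividing numerator and denominator by $k_j$ and observing $U_j/k_j\ge 0$, $L_j/k_j\ge 1$, whereas you cross-multiply and reduce to $b(k_j-K)\le aU_j$; these are equivalent manipulations.
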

\begin{proof}
Simply note that
$$ d_{I}=\sqrt[r]{\max_{1\leq j\leq m}\Big\{\frac{bk_j-aU_j}{bk_j+aL_j}\Big\}} =  \sqrt[r]{\max_{1\leq j\leq m}\Big\{\frac{b-aU_j/k_j}{b+aL_j/k_j}\Big\}} \leq \sqrt[r]{\frac{b}{a+b}}=d_{IM}$$
and apply Theorem \ref{thm:intervals}.
\end{proof}

On the other hand, denoting
$$d_{NI} = \sqrt[r]{\frac{ak_m}{bK+ak_m}},$$
we prove the following that extends \cite[Theorem 3.3]{dmytros}, which in turn is inspired by \cite[Theorem 2.1]{BFS} generalized in  \cite[Theorem 2.2(ii)]{theFerdinands}:

\begin{theorem}\label{thm:non_fin_int}
The achievement set  $E\bigl(w_n(x)\bigr)$ is not a finite union of closed bounded intervals for $0< x< \min\{\epsilon, d_{NI}\}$.
\end{theorem}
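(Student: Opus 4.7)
The strategy is to apply the contrapositive of Theorem \ref{KHM}\ref{thm:KHM.2}: to rule out that $E\bigl(w_n(x)\bigr)$ is a finite union of closed bounded intervals, it suffices to exhibit infinitely many indices $n$ with $w_n(x) > W_n(x)$.

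The plan is to test the subsequence $n = m\ell$, $\ell \geq 1$, that is, the last term of each block in the expansion (\ref{mgsw}). At such an index one has $g(m\ell) = m$ and $\lfloor (m+m\ell-1)/m\rfloor = \ell$, so
$$w_{m\ell}(x) = k_m f(x^\ell),$$
while the tail $W_{m\ell}(x)$ contains only complete blocks for the powers $x^{\ell+1}, x^{\ell+2}, \ldots$, which collapses to
$$W_{m\ell}(x) = K \sum_{j\geq 1} f(x^{\ell+j}).$$
This choice is deliberate: any $n$ lying strictly inside a block would force $W_n(x)$ to absorb leftover terms of the current block, tightening the inequality we want to reverse.

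Next I would plug in the bounds $a t^r \leq f(t) \leq b t^r$ valid on $[0,\epsilon]$ (which we may use since $x < \epsilon$ implies $x^j \in [0,\epsilon]$ for all $j\geq 1$), obtaining
$$w_{m\ell}(x) \geq a k_m x^{\ell r}, \qquad W_{m\ell}(x) \leq bK x^{\ell r}\cdot \frac{x^r}{1-x^r}.$$
Hence $w_{m\ell}(x) > W_{m\ell}(x)$ is implied by $a k_m (1 - x^r) > bK x^r$, which after clearing denominators is exactly
$$x^r < \frac{a k_m}{bK + a k_m},$$
i.e., $x < d_{NI}$. For $x \in \bigl(0,\min\{\epsilon, d_{NI}\}\bigr)$ this holds independently of $\ell$, so $w_{m\ell}(x) > W_{m\ell}(x)$ for every $\ell \geq 1$, producing infinitely many indices at which the Kakeya--Hornich interval condition fails.

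The estimation is essentially dual to the one carried out in the proof of Theorem \ref{thm:intervals}, so I do not expect any technical obstruction there. The only delicate point is the selection of the subsequence $n = m\ell$: this is the index within each block where $w_n$ is smallest (equal to $k_m f(x^\ell)$) but is compensated by the tail $W_n$ being as small as possible (no partial block contributions), and this balance is precisely what produces the threshold $d_{NI}$ involving $k_m$ rather than $k_1$.
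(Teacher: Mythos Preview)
Your proposal is correct and follows essentially the same route as the paper: both pick the subsequence $n=m\ell$, bound $w_{m\ell}(x)\geq a k_m x^{\ell r}$ from below and $W_{m\ell}(x)\leq bK\,x^{\ell r}\frac{x^r}{1-x^r}$ from above via the power bounds on $f$, and solve the resulting inequality to obtain the threshold $d_{NI}$. Your extra remarks on why $n=m\ell$ is the right choice and why the power bounds are applicable (since $x^j\in[0,\epsilon]$) only add clarity.
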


\begin{proof}

According to Theorem \ref{KHM}.\ref{thm:KHM.2} it is sufficient to show that $w_{\ell m}(x)>W_{\ell m}(x)$ for any $\ell  \in \N$ and $0< x< \min\{\epsilon, d_{NI}\}$. Observe that, for any $x\in(0,\epsilon)$,
$$w_{\ell m}(x)=k_m f(x^\ell) \geq a \cdot k_m x^{\ell r},$$
while
$$W_{\ell m}(x)=K \cdot \sum_{j>\ell} f(x^j) \leq b \cdot \frac{Kx^{(\ell+1)r}}{1-x^r}.$$
Therefore $w_{\ell m}(x) > W_{\ell m}(x)$ as long as

$$W_{{\ell}m}\leq b \cdot \frac{Kx^{({\ell}+1)r}}{1-x^r} < a \cdot k_m x^{{\ell}r} \leq w_{{\ell}m}.$$
That is, whenever
$$x<\sqrt[r]{\frac{a k_m}{bK+ak_m}}=d_{NI}.$$

\end{proof}

The next result extends \cite[Theorem 2.1]{BFS}, \cite[Theorem 2.2(i)]{theFerdinands} and \cite[Theorem 3.4]{dmytros}. We follow  a strategy similar to the proofs of these references. 

\begin{theorem}\label{thm:arithmetic_contains_interval}
Choose $\lambda,\mu\in\R^+$ and  $s\in\N$ such that every number ${\mu}, {\mu}+\lambda, {\mu}+2\lambda, \dots, {\mu}+s\lambda$,
is a subsum of the (finite) series $\sum_{i=1}^m k_i$, and write  
$$d_{CI} = \sqrt[r]{\frac{b}{s \cdot a + b}}\,.$$
Then, whenever $d_{CI} < \epsilon$, $E\bigl(w_n(x)\bigr)$ contains a compact interval for any $x\in [ d_{CI},\epsilon)$.
\end{theorem}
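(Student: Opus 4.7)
The plan is to exhibit an explicit compact interval contained in $E\bigl(w_n(x)\bigr)$ of the form $\mu S(x) + \lambda\cdot E'(x)$, where $S(x)=\sum_{n\ge 1}f(x^n)$ and $E'(x)$ is the achievement set of a suitable ``$s$-fold repetition'' of the series $\sum_n f(x^n)$. The starting observation is that since each of $\mu, \mu+\lambda,\dots,\mu+s\lambda$ is a subsum of $k_1+\dots+k_m$, all of them lie in $A$. Hence, at each level $n$ we may independently choose $\alpha_n\in\{\mu,\mu+\lambda,\dots,\mu+s\lambda\}$, and factoring out the common $\mu$ gives
$$E\bigl(w_n(x)\bigr)\supseteq \mu\,S(x)+\lambda\cdot E'(x),\qquad E'(x)=\Big\{\sum_{n\ge 1}j_n\,f(x^n):\,j_n\in\{0,1,\dots,s\}\Big\}.$$

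Next, I would identify $E'(x)$ with the achievement set of the sequence $(z_k)_{k\ge 1}$ defined by $z_k=f(x^{\lceil k/s\rceil})$, i.e.\ the sequence obtained by repeating each $f(x^n)$ exactly $s$ times. Since $f$ is monotone increasing on $[0,\epsilon]$ and $x\in(0,1)$, $(z_k)$ is non-increasing and summable (by the convergence lemma applied to the weights $k_i\equiv 1$). Applying Theorem \ref{KHM}.\ref{thm:KHM.3}, I must verify that $z_k\le Z_k$ for every $k\ge 1$. Writing $k=s(n-1)+i$ with $1\le i\le s$, the tail is $Z_k=(s-i)f(x^n)+s\sum_{\ell>n}f(x^\ell)$. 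For $1\le i<s$ the first summand alone already dominates $z_k=f(x^n)$, so the inequality is automatic; the binding case is $i=s$, i.e.\ $k=sn$.

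For this critical case I would invoke the bounds from Definition \ref{def:HypoM} to estimate
$$z_{sn}=f(x^n)\le bx^{nr},\qquad Z_{sn}=s\sum_{\ell>n}f(x^\ell)\ge \frac{sa\,x^{(n+1)r}}{1-x^r},$$
and observe that $z_{sn}\le Z_{sn}$ reduces to $b(1-x^r)\le sa\,x^r$, i.e.\ $x^r\ge b/(sa+b)$, which is precisely $x\ge d_{CI}$. Consequently $E'(x)$ is a compact interval for every $x\in[d_{CI},\epsilon)$, and the affine image $\mu S(x)+\lambda E'(x)$ is therefore a compact interval inside $E\bigl(w_n(x)\bigr)$. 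The only real technical step is the tail estimate at the ends of the blocks, which mirrors the bound already established in the proof of Theorem \ref{thm:intervals}, and I do not foresee any substantive obstacle beyond the block-wise reindexing.
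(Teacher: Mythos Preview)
Your proof is correct and follows essentially the same approach as the paper: both exhibit the interval $\mu\sum_{n}f(x^n)+E(\overline w_n(x))$ inside $E(w_n(x))$, where $\overline w_n$ is the $s$-fold repetition of $\lambda f(x^n)$ (equivalently your $\lambda E'(x)$), and both reduce the interval property to the Kakeya--Hornich tail condition at the block endpoints $k=sn$, yielding precisely $x^r\ge b/(sa+b)$. The only cosmetic difference is that the paper packages the tail estimate as an application of Theorem~\ref{thm:intervals} to the constant-weight sequence $\overline k_1=\dots=\overline k_s=\lambda$, whereas you unfold that computation directly.
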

\begin{proof}
Let define
$\overline{k}_1=\overline{k}_2=\ldots=\overline{k}_{s}=\lambda>0$, and consider the convergent positive series $\sum_{n\ge 1} \overline w_n(x)$ where
$$
\overline{w}_n(x)=\overline{k}_{\overline{g}(n)}f(x^{\lfloor\frac{n+s-1}{s}\rfloor}),
$$
with $\overline{g}(n)=1+\big((n-1)\mod{s}\big).$
Then, according to Theorem \ref{thm:intervals}, $E\bigl(\overline{w}_n\bigr)$ is a compact interval for all $x\in[d_I,\epsilon)$ where now
\begin{align*}
 d_I&=\sqrt[r]{ \max_{1\leq j\leq s}\Big\{\frac{b\overline{k}_j-aU_j}{b\overline{k}_j+aL_j}\Big\}}\\
 &=\sqrt[r]{ \max_{1\leq j\leq s}\Big\{\frac{b\lambda-a\lambda(s-j)}{b\lambda+a\lambda j}\Big\}}\\
&=\sqrt[r]{\max_{1\leq j\leq s}\Big\{1-\frac{as}{aj+b}\Big\}}\\
&=\sqrt[r]{1-\frac{as}{as+b}}\\
&=\sqrt[r]{\frac{b}{b+sa}}\,.
\end{align*}
We finish the proof by showing that the interval
$$\{\sum_{n=1}^\infty {\mu}f(x^n)\}+E\bigl(\overline{w}_n(x)\bigr)
$$
is contained in $E\bigl(w_n(x)\bigr)$.

Indeed, if $z\in \{\sum_{n=1}^\infty {\mu}f(x^n)\}+E\bigl(\overline{w}_n(x)\bigr)$,  write
$$z=\sum_{n=1}^\infty \big({\mu}+ s_n\lambda)f(x^n),$$
where $s_n\in\{0,1,\ldots,s\}$. Therefore, there exist $c_{n,i}\in\{0,1\}$ such that ${\mu}+s_n\lambda=\sum_{i=1}^{m}c_{n,i}k_i$, and thus
$$z=\sum_{n=1}^\infty \big(\sum_{i=1}^{m}c_{n,i}k_i\big)f(x^n)\in E(w_n).$$
\end{proof}
The following recovers \cite[Theorem 2.2(iii)]{theFerdinands} and generalizes \cite[Corollary 3.5]{dmytros}. Under the hypothesis of the previous theorem we have:

\begin{corollary}\label{cor:Cantorval_ferdinands}
Whenever $d_{CI}<d_{NI}$, the achievement set $E\bigl(w_n(x)\bigr)$ is a Cantorval for any $x\in [ d_{CI},d_{NI})$.
\end{corollary}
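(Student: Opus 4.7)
The plan is to invoke the Guthrie--Nymann trichotomy (Theorem \ref{GN-Theorem}) and eliminate two of the three alternatives, leaving only the Cantorval case. First, I would check that the assumption $d_{CI}<d_{NI}$, together with $d_{NI}<1$ and the standing hypothesis $d_{CI}<\epsilon$ inherited from Theorem \ref{thm:arithmetic_contains_interval}, places the range $[d_{CI},d_{NI})$ inside $(0,\min\{\epsilon,d_{NI}\})\cap [d_{CI},\epsilon)$, so that both Theorem \ref{thm:non_fin_int} and Theorem \ref{thm:arithmetic_contains_interval} are simultaneously applicable for every $x$ under consideration.

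For each such $x$, Theorem \ref{thm:non_fin_int} rules out alternative \ref{thm:GN.1}: the achievement set $E\bigl(w_n(x)\bigr)$ cannot be a finite union of closed bounded intervals. To rule out alternative \ref{thm:GN.2}, I would use Theorem \ref{thm:arithmetic_contains_interval} to produce a subset of $E\bigl(w_n(x)\bigr)$ of the form $\{\sum_{n\ge 1}\mu f(x^n)\}+E\bigl(\overline{w}_n(x)\bigr)$. The proof of that theorem shows that $E\bigl(\overline{w}_n(x)\bigr)$ is a compact interval; since $\lambda>0$ and $f$ is positive on $(0,\epsilon]$ (a consequence of $f\in\calm$), this interval is nondegenerate for $x>0$. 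Hence $E\bigl(w_n(x)\bigr)$ contains a nondegenerate compact interval, which is incompatible with being homeomorphic to the Cantor set, since the latter has empty interior. By elimination, only alternative \ref{thm:GN.3} remains, so $E\bigl(w_n(x)\bigr)$ is a Cantorval for every $x\in [d_{CI},d_{NI})$.

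The main obstacle is essentially bookkeeping: one must verify the nondegeneracy of the interval supplied by Theorem \ref{thm:arithmetic_contains_interval} and ensure the simultaneous applicability of the two earlier theorems on the stated $x$-range. Both steps are routine given the standing assumption $f\in\calm$ and the explicit comparison of $d_{CI}$, $d_{NI}$, and $\epsilon$.
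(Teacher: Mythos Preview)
Your proposal is correct and follows essentially the same route as the paper: invoke the trichotomy of Theorem~\ref{GN-Theorem}, use Theorem~\ref{thm:arithmetic_contains_interval} to exclude the Cantor-set alternative via a nondegenerate subinterval, and use Theorem~\ref{thm:non_fin_int} to exclude the finite-union-of-intervals alternative. The paper's version is slightly terser (it re-derives the inequality $w_{mi}(x)>W_{mi}(x)$ rather than citing Theorem~\ref{thm:non_fin_int} directly, and it leaves the nondegeneracy check implicit), but the argument is the same.
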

\begin{proof}
Let $x \in [ d_{CI}, d_{NI})$. Then
\begin{enumerate}[label={\rm (\roman{*})}]
    \item According to the Theorem \ref{thm:arithmetic_contains_interval}, if $x \geq \sqrt[r]{\frac{b}{sa+b}}$, then $E\bigl(w_n(x)\bigr)$ contains an interval.

    \item An analogous argument to the one in the proof of Theorem \ref{thm:non_fin_int} shows that if $x < \sqrt[r]{\frac{ak_{m}}{ak_{m}+bK}}$, then $w_{mi}(x)> W_{mi}(x)$ for every $i \in \N$ and $E(w_n)$  cannot be a finite union of closed and bounded intervals.
\end{enumerate}
Therefore, $E\bigl(w_n(x)\bigr)$ must be a Cantorval.
\end{proof}

\begin{remark} Observe that $d_{CI}< d_{NI}$ only if
$$b<a \sqrt{\frac{s  k_m}{K}}.$$
Therefore, as $a\leq b$, $$1<\frac{s  k_m}{K}$$ is a necessary condition for Corollary \ref{cor:Cantorval_ferdinands}.
\end{remark}

Finally, let 
$$
d_C=\sqrt[r]{\min_{1\leq j\leq m}\Big\{\frac{ak_j-bU_j}{ak_j+bL_j}\Big\}.}
$$
Our last result extends and refines \cite[Theorem 3.7]{dmytros}:
\begin{theorem}\label{thm:cantor}
Whenever $d_{C}>0$, the achievement set $E\bigl(w_n(x)\bigr)$ is homeomorphic to the Cantor set for $0< x< \min\{\epsilon, d_{C}\}$.
\end{theorem}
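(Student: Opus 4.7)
The plan is to mimic the proof of Theorem \ref{thm:intervals}, but invoking Theorem \ref{KHM}.\ref{thm:KHM.4} instead of \ref{thm:KHM.3}, and using the two-sided bounds from the definition of $\calm$ in the opposite directions. Concretely, since $d_C>0$ implies that $ak_j-bU_j>0$ for every $1\le j\le m$, it is enough to prove that $w_n(x)>W_n(x)$ for \emph{every} $n\in\N$, and then Theorem \ref{KHM}.\ref{thm:KHM.4} will give that $E\bigl(w_n(x)\bigr)$ is homeomorphic to the Cantor set.

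First I would fix $n\in\N$, set $p=\lfloor\tfrac{m+n-1}{m}\rfloor$, and use the power-bounded hypothesis from below on $w_n(x)$:
$$w_n(x)=k_{g(n)}f(x^p)\ge a\cdot k_{g(n)}\,x^{pr}.$$
Next, I would use the same identification of the tail as in the proof of Theorem \ref{thm:intervals}, but now applying the upper bound $f(x^\ell)\le bx^{\ell r}$ to estimate
$$W_n(x)=U_{g(n)}f(x^p)+K\sum_{\ell\ge1}f(x^{p+\ell})\le b\cdot x^{pr}\Big(U_{g(n)}+K\,\frac{x^r}{1-x^r}\Big),$$
which is valid for $x\in[0,\epsilon)$ since $\epsilon<1$. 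Here the geometric series is convergent precisely because $x<\epsilon<1$.

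Comparing the two displays, the desired strict inequality $w_n(x)>W_n(x)$ follows whenever
$$a\cdot k_{g(n)}> b\Big(U_{g(n)}+K\,\frac{x^r}{1-x^r}\Big),$$
and a routine manipulation (using $K-U_{g(n)}=L_{g(n)}$) reduces this to
$$x^r<\frac{ak_{g(n)}-bU_{g(n)}}{ak_{g(n)}+bL_{g(n)}}.$$
Since $g(n)$ ranges over $\{1,\dots,m\}$ as $n$ varies, requiring this for all $n$ is exactly the condition $x<d_C$; and for $x=0$ the strict inequality holds trivially because $d_C>0$ forces $ak_{g(n)}-bU_{g(n)}>0$.

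I do not expect a serious obstacle here: the argument is the mirror image of Theorem \ref{thm:intervals}, with the roles of $a$ and $b$ swapped to get the lower bound on $w_n$ and the upper bound on $W_n$. The only point that deserves a brief comment is that the hypothesis $d_C>0$ is needed both to make the radicand under $\sqrt[r]{\cdot}$ nonnegative (so that $d_C$ is well defined as written) and to guarantee a nondegenerate interval $(0,\min\{\epsilon,d_C\})$ on which the conclusion applies.
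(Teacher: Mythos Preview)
Your proposal is correct and follows essentially the same route as the paper: bound $w_n(x)$ from below by $a\,k_{g(n)}x^{pr}$, bound the tail $W_n(x)$ from above by $b\,x^{pr}\bigl(U_{g(n)}+Kx^r/(1-x^r)\bigr)$, and rearrange to obtain $x^r<(ak_{g(n)}-bU_{g(n)})/(ak_{g(n)}+bL_{g(n)})$, which for all $n$ amounts to $x<d_C$. The only cosmetic differences are that you invoke Theorem~\ref{KHM}\ref{thm:KHM.4} (the paper's text cites \ref{thm:KHM.2}, an apparent slip) and that you note the inequality in fact holds for \emph{every} $n$, not merely all but finitely many.
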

\begin{proof}
According to Theorem \ref{KHM}.\ref{thm:KHM.2}, it is enough to show that $w_n(x)> W_n(x)$ for all but finitely many $n\in\N$ and $x\in[0,d_C]$. Observe that, for any $x\in(0,\epsilon)$,

\begin{align*}
    w_n(x) =k_{g(n)}f(x^{\lfloor\frac{m+n-1}{m}\rfloor}) \geq a\cdot k_{g(n)}\big( x^{\lfloor\frac{m+n-1}{m} \rfloor r} \big)
\end{align*}
while
\begin{align*}
    W_n(x)
    &=\sum_{\ell>n} w_\ell
    \leq b\cdot  \sum_{\ell>n} k_{g(\ell)}\big(x^{\lfloor\frac{m+\ell-1}{m}\rfloor r}\big)
    =b\cdot x^{\lfloor\frac{m+n-1}{m}\rfloor r}\Big( U_{g(n)}  + \frac{Kx^r}{1-x^r}\Big).
\end{align*}

Therefore, $w_n(x)> W_n(x)$ as long as
$$a\cdot k_{g(n)}x^{\lfloor\frac{m+n-1}{m}\rfloor r}>b\cdot x^{\lfloor\frac{m+n-1}{m}\rfloor r}\Big( U_{g(n)}  + \frac{Kx^r}{1-x^r}\Big),$$
and since $x>0$, whenever
$$a\cdot k_{g(n)}>b\cdot \Big( U_{g(n)}  + \frac{Kx^r}{1-x^r}\Big).$$
That is, when
$$x<\sqrt[r]{\frac{ak_{g(n)}-bU_{g(n)}}{ak_{g(n)}+bL_{g(n)}}}$$
for all $n$ and the theorem follows.
\end{proof}

\bibliographystyle{abbrv}
\bibliography{Cartovals_ref}
\end{document}